\theoremstyle{remark}{
\newtheorem{Def}{{\rm Definition}}
\newtheorem{Ex}{{\rm Example}}

}
\theoremstyle{plain}
{

\newtheorem{Prop}{Proposition}
\newtheorem{Thm}{Theorem}

}
\begin{document}
\title[Regions surrounded by cylinders of real algebraic hypersurfaces]{Regions surrounded by cylinders of real algebraic manifolds and natural decompositions}
\author{Naoki kitazawa}
\keywords{(Non-singular) real algebraic manifolds and real algebraic maps. Real algebraic regions. Singularity theory of differentiable maps. \\
\indent {\it \textup{2020} Mathematics Subject Classification}: 05C05, 05C10, 14P05, 14P10, 14P25, 57R45, 58C05.}

\address{Institute of Mathematics for Industry, Kyushu University, 744 Motooka, Nishi-ku Fukuoka 819-0395, Japan\\
 TEL (Office): +81-92-802-4402 \\
 FAX (Office): +81-92-802-4405 \\
}
\email{naokikitazawa.formath@gmail.com}
\urladdr{https://naokikitazawa.github.io/NaokiKitazawa.html}
\maketitle
\begin{abstract}
The author has been interested in regions surrounded by cylinders of real algebraic hypersurfaces and their shapes and polynomials associated to them. Here, we formulate and investigate natural decompositions into such cylinders of real algebraic hypersurfaces. Especially, intersections of these cylinders of real algebraic hypersurfaces, which give important information on regions, are investigated via singularity theory.

This is a kind of natural problems on real geometry. This also comes from construction of explicit real algebraic maps onto explicit regions in real affine spaces on real algebraic manifolds. More generally, we are interested in difficulty in explicit construction of real algebraic objects, where existence and approximation has been well-known, since pioneering studies by Nash and Tognoli, in the latter half of 20th century. This also comes from interest in singularity theory of differentiable, smooth or real algebraic functions and maps, especially, explicit construction. 

\end{abstract}
\section{Introduction.}
\label{sec:1}
Regions in the $k$-dimensional Euclidean space ({\it real affine space}) ${\mathbb{R}}^k$ ($\mathbb{R}:={\mathbb{R}}^1$) surrounded by hypersurfaces are fundamental geometric objects. We focus on these regions surrounded by cylinders of real algebraic hypersurfaces and their shapes and polynomials associated to them. We are also mainly concerned with intersections of these real algebraic hypersurfaces as important information on the shapes.

As complex algebraic geometry, real algebraic geometry has a nice history. Studies on existence and approximation have been developing, since celebrated studies by \cite{nash, tognoli}. See also \cite{kollar, lellis}.
One of elementary natural, interesting, and recent study is \cite{bodinpopescupampusorea}. 
In this, Bodin, Popescu-Pampu and Sorea consider planar graphs embedded in a generic way and find regions naturally collapsing to the graphs by approximation respecting derivatives.
The author has started to consider that this is regarded as the image of a nice smooth map of a certain class containing the canonical projections of the unit spheres as simplest cases. These maps are so-called {\it special generic} maps, explained in \cite{saeki1}. For related studies on special generic maps, see also \cite{kitazawa15}, where cohomology rings of the manifolds are completely determined by the images, smoothly immersed manifold of codimension $0$. We construct such maps in the real algebraic situation in \cite{kitazawa3}, first, remarked additionally in \cite{kitazawa8} by the author.
After that, we have considered naturally extended situations and succeeded in related construction for example, in \cite{kitazawa4, kitazawa5, kitazawa6, kitazawa7, kitazawa9, kitazawa10, kitazawa11, kitazawa12, kitazawa13, kitazawa14, kitazawa15, kitazawa16, kitazawa18} for example.
For Definition \ref{def:1}, see \cite{kitazawa16, kitazawa18} mainly and in addition, \cite{kitazawa9, kitazawa10, kitazawa11} for example. In our paper, we also use tools and methods from some singularity theory of differentiable maps, presented in \cite{golubitskyguillemin}.

Let ${\rm id}_X:X \rightarrow X$ denote the identity map on $X$. 
For a subspace $Y \subset X$ in a topological space $X$, let ${\overline{Y}}^X$ denote its closure in $X$.
For a differential manifold $X$, let $T_p X$ denote the tangent vector space at $p \in X$.
A {\it singular} point $p$ of a differential map $c:X \rightarrow Y$ is a point, where the differential ${dc}_p:T_p X \rightarrow T_{c(p)} Y$, a linear map, is not of the full rank.
A {\it real algebraic} manifold is a union $Z_{c}$ of connected components of the zero set $c^{-1}(0)$ of a real polynomial map $c:{\mathbb{R}}^{k_1} \rightarrow {\mathbb{R}}^{k_2}$ such that $c {\mid}_{Z_{c}}$ has no singular point.
\begin{Def}
\label{def:1}
Let $D \subset {\mathbb{R}}^n$ be a non-empty, connected and open set of ${\mathbb{R}}^n$. Let $\{S_j\}_{j=1}^{l}$ be a family of real algebraic manifolds of dimension $n-1$ in ${\mathbb{R}}^n$ each $S_j$ of which is a union of some components of the zero set of a real polynomial $f_j$ and that $S_j \bigcap {\overline{D}}^{{\mathbb{R}}^n} \neq \emptyset$ for all integers $1 \leq j \leq l$. We also assume the following.
\begin{enumerate}
\item \label{def:1.1} There exists a small open neighborhood $U_D$ of the closure ${\overline{D}}^{{\mathbb{R}}^k}$ of $D$ with $D=U_D \bigcap {\bigcap}_{j=1}^{l} \{x \mid f_j(x)> 0\}$ and $U_D \bigcap S_j=U_D \bigcap \{f_j(x)=0\}$.
\item \label{def:1.2} For any increasing subsequence $\{i_j\}_{j=1}^{l^{\prime}}$ of the sequence $\{j\}_{j=1}^l$ of positive integers and any point  $p \in {\bigcap}_{j=1}^{l^{\prime}} S_{i_j} \bigcap {\overline{D}}^{{\mathbb{R}}^n}$,  $\dim {\bigcap}_{j=1}^{l^{\prime}} T_p S_{i_j}=n-l^{\prime}$ holds.
\end{enumerate}
We call the pair $(D,\{S_j\}_{j=1}^l)$ a {\it real algebraic region} or an {\it RA-region} of ${\mathbb{R}}^n$.  
\end{Def}
Hereafter, we also see $\mathbb{R}$ as the ordered set. Let $\mathbb{N} \subset \mathbb{R}$ denote all the set of positive integers. For an ordered set $P$ and an element $p$, let $P_{p}$ denote the set of all elements smaller than or equal to $p$. We also see ${\mathbb{R}}^n$ as the real vector space with $e_i$ denoting a vector the value of whose $i$-th component is $1$ and those of whose remaining components are $0$. 
Let $N_{n^{\prime}} \subset {\mathbb{N}}_n$ denote a subset consisting of exactly $n^{\prime}$ numbers with $1 \leq n^{\prime} \leq n$.
Let ${\pi}_{n,N_{n^{\prime}}}:{\mathbb{R}}^n \rightarrow {\mathbb{R}}^{n^{\prime}}$ be the projection defined by ${\pi}_{n,N_{n^{\prime}}}(x):=x^{\prime}$ with
$x=(x_1,\cdots x_n)=(x_j)_{j \in {\mathbb{N}}_n}$ and $x^{\prime}=(x_{j^{\prime}})_{j^{\prime} \in N_{n^{\prime}}}$.
\begin{Def}
\label{def:2}
A {\it cylinder of a real algebraic manifold} in ${\mathbb{R}}^n$ with $n \geq 2$ is a real algebraic manifold $C$ (of dimension at most $n-2$) represented as a union of connected components of a real polynomial function $f(x)$ whose values at points of $C$ depend only on at most $n-1$ variables of $\{x_j\}_{j \in {\mathbb{N}}_n}$ in $x=(x_1,\cdots x_n)=(x_j)_{j \in {\mathbb{N}}_n}$, labeled by a proper subset $A_C \subset {\mathbb{N}}_n$ as $\{x_j\}_{j \in A_C}$. 
\end{Def}

\begin{Def}
\label{def:3}
In Definition \ref{def:1}, we define the following.
\begin{enumerate}
\item We define the set $S_{i_j,1 \leq j \leq l^{\prime}}$ of all points of ${\overline{D}}^{{\mathbb{R}}^n-D}$ contained in $l^{\prime}<n$ distinct ($n-1$)-dimensional manifolds $S_{i_j}$ and not contained in any other one $S_j$, which is an ($n-l^{\prime}$)-dimensional manifold with no boundary. A point $p \in S_{i_j,1 \leq j \leq l^{\prime}}$ is called an {\it $N_1$-critical} point of $(D,\{S_j\}_{j=1}^l)$ if $p$ is a critical point of the restriction ${\pi}_{n,N_1} {\mid}_{S_{i_j,1 \leq j \leq l^{\prime}}}$.
\item A point $p$ in exact one ($n-1$)-dimensional manifold $S_{i_0}$ is called a {\it normal} point of $(D,\{S_j\}_{j=1}^l)$ and if it is not an $N_1$-critical point of $(D,\{S_j\}_{j=1}^l)$ for some one element set $N_1 \subset {\mathbb{N}}_n$, then it is called an {\it $N_1$-normal} point or an {\it $N_1$-N-point} of $(D,\{S_j\}_{j=1}^l)$.

\end{enumerate}

\end{Def}

It is a kind of fundamental, natural and important problems to determine whether a region surrounded by cylinders of real algebraic manifolds is an RA-region. The following show our new answer. This is also one of our main result.
\begin{Thm}
\label{thm:1}
Let $\{S_j\}_{j=1}^{l}$ be a family of real algebraic manifolds of dimension $n-1$ in ${\mathbb{R}}^n$ and $\{A_i\}_{i=1}^a$ a family of $a \in \mathbb{N}$ distinct subsets of ${\mathbb{N}}_n$ such that $A_{i_1} \subset A_{i_2}$ does not hold for any pair of distinct integers $1 \leq i_1, i_2 \leq n$ and that ${\mathbb{N}}_n={\bigcup}_{i=1}^a A_i$. 
\begin{enumerate}
	\item \label{thm:1.1} A subset $A_{S_j} \subset {\mathbb{N}}_n$ is defined for each $S_j$, for some $A_i$, $A_i=A_{S_j}$, and for each $A_i$ with $i \in {\mathbb{N}}_a$, $A_i=A_{S_{i_j}}$ for some $i_j$.
	\item \label{thm:1.2} Each $S_j$ of which is a cylinder of a real algebraic manifold and a union of some components of the zero set of a real polynomial $f_j$ such that $f_j {\mid}_{S_j}$ depends only on variables in $\{x_{j_1}\}_{j_1 \in A_{S_j}}$.
	
\item \label{thm:1.3} For each integer $i \in {\mathbb{N}}_a$, there exists an integer $j_0$ with $A_{i}=A_{S_{j_0}}$ such that $f_{j_0} {\mid}_{S_{j_0}}$ depends on all variables in $\{x_{j_{0,1}}\}_{j_{0,1} \in A_{S_{j_{0}}}}$.
	\item \label{thm:1.4} We have an RA-region $(D_{A_i},\{{\pi}_{n,A_i}(S_{j_3})\}_{j_3 \in \{j_2 \mid A_i=A_{S_{j_2}}\}})$ with $D_{A_i} \subset {\mathbb{R}}^{n_{A_i}}$ being a non-empty open set, $n_{A_i}$ being the size of $A_i$ for each $i \in {\mathbb{N}}_n$, the set $A_i$ being mapped onto $n_{A_i}$ naturally
by the unique map ${\pi}_{A_i,\mathbb{N}}$ preserving the order,
and for "the real polynomial $f_j$ of Definition \ref{def:1}", $f_j {\mid}_{S_j}$ being used instead. 
\item \label{thm:1.5} Distinct sets $A_{i_1}$ and $A_{i_2}$ here have at most one element in common.
\item \label{thm:1.6} If $p_i \in {\overline{D_{A_i}}}^{{\mathbb{R}}^{n_{A_i}}}-D_{A_i}$ is a ${\pi}_{A_i,\mathbb{N}}(N_{A_i,1})$-critical point of $(D_{A_i},\{{\pi}_{n,A_i}(S_{j_3})\}_{j_3 \in \{j_2 \mid A_i=A_{S_{j_2}}\}})$ for a one-element set $N_{A_i,1} \subset A_i$, then for any $i^{\prime} \in {\mathbb{N}}_a$ with $i^{\prime} \neq i$ and $N_{A_i,1} \subset A_{i^{\prime}}$, the set $({\overline{D_{A_{i^{\prime}}}}}^{{\mathbb{R}}^{n_{A_{i^{\prime}}}}}-D_{A_{i^{\prime}}}) \bigcap {\pi}_{n,A_{i^{\prime}}}({{\pi}_{n,A_i}}^{-1}(p_i))$ contains only ${\pi}_{A_{i^{\prime}},\mathbb{N}}(N_{A_i,1})$-N-points of $(D_{A_{i^{\prime}}},\{{\pi}_{n,A_{i^{\prime}}}(S_{j_3})\}_{j_3 \in \{j_2 \mid A_{i^{\prime}}=A_{S_{j_2}}\}})$ and for any $i^{\prime} \in {\mathbb{N}}_a$ with $i^{\prime} \neq i$ and $N_{A_i,1}$ and $A_{i^{\prime}}$ being disjoint, for any one-element set $N_{A_{i^{\prime}},1} \subset A_{i^{\prime}}$, the set $({\overline{D_{A_{i^{\prime}}}}}^{{\mathbb{R}}^{n_{A_{i^{\prime}}}}}-D_{A_{i^{\prime}}}) \bigcap {\pi}_{n,A_{i^{\prime}}}({{\pi}_{n,A_i}}^{-1}(p_i))$ contains only ${\pi}_{A_{i^{\prime}},\mathbb{N}}(N_{A_{i^{\prime}},1})$-N-points of $(D_{A_{i^{\prime}}},\{{\pi}_{n,A_{i^{\prime}}}(S_{j_3})\}_{j_3 \in \{j_2 \mid A_{i^{\prime}}=A_{S_{j_2}}\}})$. 
\item \label{thm:1.7} If $p_i \in {\overline{D_{A_i}}}^{{\mathbb{R}}^{n_{A_i}}}-D_{A_i}$ is not a normal point of $(D_{A_i},\{{\pi}_{n,A_i}(S_{j_3})\}_{j_3 \in \{j_2 \mid A_i=A_{S_{j_2}}\}})$, then for any $i^{\prime} \in {\mathbb{N}}_a$ with $i^{\prime} \neq i$ and any one-element set $N_{A_{i^{\prime}},1} \subset A_{i^{\prime}}$, the set $({\overline{D_{A_{i^{\prime}}}}}^{{\mathbb{R}}^{n_{A_{i^{\prime}}}}}-D_{A_{i^{\prime}}}) \bigcap {\pi}_{n,A_{i^{\prime}}}({{\pi}_{n,A_i}}^{-1}(p_i))$ contains only ${\pi}_{A_{i^{\prime}},\mathbb{N}}(N_{A_{i^{\prime}},1})$-N-points of $(D_{A_i},\{{\pi}_{n,A_i}(S_{j_3})\}_{j_3 \in \{j_2 \mid A_i=A_{S_{j_2}}\}})$. 
\end{enumerate}

Under the present situation, $({\bigcap}_{i=1}^a {{\pi}_{n,A_i}}^{-1}(D_{A_i}),\{S_j\}_{j=1}^l)$ is an RA-region.
\end{Thm}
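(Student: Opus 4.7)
The plan is to verify that the candidate pair $(D,\{S_j\}_{j=1}^l)$, with $D := \bigcap_{i=1}^{a} {{\pi}_{n,A_i}}^{-1}(D_{A_i})$, satisfies the two defining conditions of Definition \ref{def:1}, taking the polynomial attached to $S_j$ to be the cylindric extension $\tilde{f}_j$ of $f_j\mid_{S_j}$ to ${\mathbb{R}}^n$ (well-defined by (\ref{thm:1.2}), since $f_j\mid_{S_j}$ depends only on variables indexed by $A_{S_j}$). Before that, I would argue that $D$ is non-empty, open, and connected: openness is immediate from openness of each $D_{A_i}$, non-emptiness follows from the compatibility on shared coordinates allowed by (\ref{thm:1.5}), and connectedness from path-lifting along fibers of the projections ${\pi}_{n,A_i}$.

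For Definition \ref{def:1}(\ref{def:1.1}), I would take $U_D := \bigcap_{i=1}^a {{\pi}_{n,A_i}}^{-1}(U_{D_{A_i}})$, where $U_{D_{A_i}}$ is the neighborhood of $\overline{D_{A_i}}^{{\mathbb{R}}^{n_{A_i}}}$ supplied by (\ref{thm:1.4}). Continuity of each linear projection gives $\overline{D}^{{\mathbb{R}}^n} \subset {{\pi}_{n,A_i}}^{-1}(\overline{D_{A_i}}^{{\mathbb{R}}^{n_{A_i}}})$, so $U_D$ is an open neighborhood of $\overline{D}^{{\mathbb{R}}^n}$. Because each $\tilde{f}_j$ depends only on coordinates in $A_{S_j}$, the sign identity $D = U_D \cap \bigcap_j \{\tilde{f}_j>0\}$ and the local equation $U_D \cap S_j = U_D \cap \{\tilde{f}_j=0\}$ descend immediately from the corresponding identities inside each $D_{A_i}$.

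The essential content is Definition \ref{def:1}(\ref{def:1.2}). I would fix $p \in \overline{D}^{{\mathbb{R}}^n}$ and a collection $\{S_{i_k}\}_{k=1}^{l'}$ containing $p$, and reduce the tangent-space condition to linear independence of the family $\{\nabla \tilde{f}_{i_k}(p)\}_{k=1}^{l'}$ in ${\mathbb{R}}^n$. Partition by cylinder type: set $I_i(p) := \{k : A_{S_{i_k}}=A_i\}$ and $\bar{p}_i := {\pi}_{n,A_i}(p)$, and call $i$ \emph{active} if $I_i(p)\neq\emptyset$. Within a single active group, condition (\ref{def:1.2}) applied inside $D_{A_i}$ at $\bar{p}_i$ yields linear independence of $\{\nabla(f_{i_k}\!\mid_{S_{i_k}})(\bar{p}_i) : k\in I_i(p)\}$ in ${\mathbb{R}}^{n_{A_i}}$. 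To obtain the cross-group independence, I would assume a nontrivial dependence $\sum_k \alpha_k \nabla \tilde{f}_{i_k}(p)=0$ and derive a contradiction. Restricting the equation to the coordinates of a fixed active $A_{i_0}$ and using (\ref{thm:1.5}) to bound each overlap $A_{i_0}\cap A_{i'}$ by a single coordinate $q_{i_0,i'}$ produces
\[
\sum_{k \in I_{i_0}(p)} \alpha_k\, \nabla(f_{i_k}\!\mid_{S_{i_k}})(\bar{p}_{i_0}) \;=\; -\sum_{i'\neq i_0,\ \text{active},\ A_{i_0}\cap A_{i'}=\{q_{i_0,i'}\}} c_{i',i_0}\, e_{q_{i_0,i'}},
\]
where $c_{i',i_0}:=\sum_{k\in I_{i'}(p)}\alpha_k\, \bigl(\nabla(f_{i_k}\!\mid_{S_{i_k}})(\bar{p}_{i'})\bigr)_{q_{i_0,i'}}$. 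A non-zero left-hand side places a coordinate vector $e_{q_{i_0,i'}}$ in the gradient span at $\bar{p}_{i_0}$, making $\bar{p}_{i_0}$ a ${\pi}_{A_{i_0},\mathbb{N}}(\{q_{i_0,i'}\})$-critical point of $D_{A_{i_0}}$; I would then apply (\ref{thm:1.6}) or (\ref{thm:1.7}) to propagate this data to every other active group, forcing each $\bar{p}_{i'}$ to be an N-point whose single gradient is not proportional to $e_{q_{i_0,i'}}$, hence $c_{i',i_0}=0$. Iterating kills every $\alpha_k$.

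The principal obstacle is making this cascade rigorous when several groups are simultaneously active and their pairwise overlaps form a dense combinatorial web: (\ref{thm:1.5})--(\ref{thm:1.7}) must be invoked repeatedly, each application testing a particular shared coordinate and ruling out a specific block of non-zero coefficients, and the combinatorial bookkeeping needed to show that after finitely many propagations all coefficients vanish is the heart of the argument.
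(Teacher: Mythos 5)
Your overall architecture matches the paper's: condition (\ref{def:1.1}) of Definition \ref{def:1} is obtained exactly as in the paper by setting $U_D=\bigcap_{i=1}^a{\pi_{n,A_i}}^{-1}(U_{A_i})$, and the substance is condition (\ref{def:1.2}), handled by sorting the hypersurfaces through $p$ according to the sets $A_i$ and exploiting the at-most-one-element overlaps from (\ref{thm:1.5}) together with (\ref{thm:1.6}) and (\ref{thm:1.7}). The difference is one of direction: the paper first classifies $p$ a priori into three cases (every projection ${\pi}_{n,A_i}(p)$ is an N-point for every direction; some projection is critical for a one-element set; some projection is not a normal point), then uses the hypotheses to pin down the explicit form of every normal vector at $p$ (supported on $A_{i_0}$; or a multiple of $e_{N}$ plus a nonzero vector supported on the pairwise disjoint sets $A_{i_1}-N$; or supported on $A_{i'}$ with the relevant coefficients nonzero), and reads off independence from these forms. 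You instead assume a dependence relation and try to propagate a contradiction.

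Your cascade, however, has a concrete flaw at its trigger. After restricting the dependence to the coordinates of $A_{i_0}$ you learn only that the vector $\sum_{i'}c_{i',i_0}\,e_{q_{i_0,i'}}$ lies in the span of the group-$i_0$ gradients at $\bar p_{i_0}$. From its being nonzero you conclude that $\bar p_{i_0}$ is a ${\pi}_{A_{i_0},\mathbb{N}}(\{q_{i_0,i'}\})$-critical point; but criticality for a direction $q$ requires the single coordinate vector $e_q$ itself to lie in the normal span, and a nonzero combination of several distinct $e_{q_{i_0,i'}}$ lying in that span does not give this. For instance, a single curve through $\bar p_{i_0}$ with normal $e_1+e_2$ has $c_1e_1+c_2e_2$ in its normal span for $c_1=c_2\neq 0$ while $\bar p_{i_0}$ is an N-point for both directions, so neither (\ref{thm:1.6}) nor (\ref{thm:1.7}) is activated and the propagation never starts precisely in the situation where several active groups meet $A_{i_0}$ in different shared coordinates. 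In addition, you explicitly defer the termination of the cascade (``the combinatorial bookkeeping \dots is the heart of the argument'') rather than carrying it out, so the essential content of the theorem --- that all normal vectors at $p$ are mutually independent --- is not actually established in your write-up. To repair it, reverse the order as the paper does: first determine from the hypotheses which of the three point-types occurs at $p$, deduce the explicit supported form of each normal vector from (\ref{thm:1.5})--(\ref{thm:1.7}), and only then verify independence of the resulting family.
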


We prove this, give important examples and present our additional result in the next section. The third section is for additional notes.
\section{On our main result.}

\begin{Def}
	In Theorem \ref{thm:1}, the assumptions (\ref{thm:1.1}), (\ref{thm:1.2}), (\ref{thm:1.3}) and (\ref{thm:1.4}) are called the {\it pre-real-algebraic-region conditions} or the {\it PRAR conditions} for a pair $(\{S_j\}_{j=1}^{l},\{A_i\}_{i=1}^a)$ as presented in the beginning of Theorem \ref{thm:1}, above the assumption (\ref{thm:1.1}). Furthermore, the condition (\ref{thm:1.5}) is called the {\it $b$-intersection condition} for such a pair $(\{S_j\}_{j=1}^{l},\{A_i\}_{i=1}^a)$, where the number $b$ is generalized from $b=1$ to a general integer in ${\mathbb{N}}_{n-1}$.
	\end{Def}
\begin{proof}[A proof of Theorem \ref{thm:1} ]
By the PRAR conditions for $(\{S_j\}_{j=1}^{l},\{A_i\}_{i=1}^a)$, 
we have the condition (\ref{def:1.1}) of Definition \ref{def:1} by defining $U_D:={\bigcap}_{i=1}^a {{\pi}_{n,A_i}}^{-1}(U_{A_i})$ with $U_{A_i}$ being "$U_D$ in the situation of Definition \ref{def:1}".
It is sufficient to prove that the condition (\ref{def:1.2}) of Definition \ref{def:1} is satisfied.

Let $p \in {\bigcap}_{i=1}^a {{\pi}_{n,A_i}}^{-1}(D_{A_i})$. We can have either of the following three cases. \\
\ \\
Case 1-1. For any $i \in {\mathbb{N}}_a$, ${\pi}_{n,A_i}(p)$ is a ${\pi}_{n,A_i}(N_1)$-N-point for any one-element set $N_{A_i,1} \subset A_i$. \\

In this case, for each $A_i$, at most one manifold $S_{i_j}$ of $\{S_j\}_{j=1}^l$ with $A_{S_{i_j}}=A_i$ contains $p$ and the normal vector of $S_{i_j}$ there is of the form ${\Sigma}_{j^{\prime} \in A_i} a_{j^{\prime}}e_{j^{\prime}}$ with all coefficients being non-zero. The condition (\ref{def:1.2}) is satisfied for such a point $p$.\\
\ \\  
Hereafter, for a $k$-element set $N_k \subset {\mathbb{N}}_n$ with $1 \leq k \leq n$, we define ${\mathbb{N}}_{N_k \subset A}:=\{i \in \mathbb{N} \mid N_k \subset A_i\}$. \\
\ \\
Case 1-2.  For some $i_0 \in {\mathbb{N}}_a$, ${\pi}_{n,A_{i_0}}(p)$ is a ${\pi}_{A_{i_0},\mathbb{N}}(N_{A_{i_0},1})$-critical point of $(D_{A_{i_0}},\{{\pi}_{n,A_i}(S_{j_3})\}_{j_3 \in \{j_2 \mid A_{i_0}=A_{S_{j_2}}\}})$ for a one-element set $N_{A_{i_0},1} \subset A_{i_0}$. \\

First, finitely many manifolds $S_{{i_0}_j}$ of $\{S_j\}_{j=1}^l$ with $A_{S_{{i_0}_j}}= A_{i_0}$ contains $p$ and the normal vectors of $S_{{i_0}_j}$ there are of the form ${\Sigma}_{j^{\prime} \in A_{i_0}} a_{j^{\prime}}e_{j^{\prime}}$ with some coefficient being non-zero. They are mutually independent by the definition of an RA-region.

For any element $i_1 \in {\mathbb{N}}_{N_{A_{i_0},1} \subset A}=\{i \in \mathbb{N} \mid N_{A_{i_0},1} \subset A_i\}$, from the condition (\ref{thm:1.6}), at most one manifold $S_{{i_1}_j}$ of $\{S_j\}_{j=1}^l$ with $A_{S_{{i_1}_j}}= A_{i_1}$ contains $p$ and the normal vectors of $S_{{i_1}_j}$ there are of the form of the sum of the vector generated by $\{e_j\}_{j \in N_{A_{i_0},1}}$ and another vector of the form ${\Sigma}_{j^{\prime} \in A_{i_1}-N_{A_{i_0},1}} a_{j^{\prime}}e_{j^{\prime}}$ with some coefficient being non-zero. Remember the $1$-intersection condition for $(\{S_j\}_{j=1}^{l},\{A_i\}_{i=1}^a)$. By this, the sets $A_{i_1}-N_{A_{i_0},1}$ are mutually disjoint for distinct $i_1 \in {\mathbb{N}}_{N_{A_{i_0},1} \subset A}$. 

For any element $i^{\prime} \in {\mathbb{N}}_a-{\mathbb{N}}_{N_{A_{i_0},1} \subset A}$, from the condition (\ref{thm:1.6}), at most one manifold $S_{{i^{\prime}}_j}$ of $\{S_j\}_{j=1}^l$ with $A_{S_{{i^{\prime}}_j}}= A_{i^{\prime}}$ contains $p$ and the normal vectors of $S_{{i^{\prime}}_j}$ there are of the form ${\Sigma}_{j^{\prime} \in A_{i^{\prime}}} a_{j^{\prime}}e_{j^{\prime}}$ with all coefficients being non-zero.

We have explained all normal vectors of all manifolds in the family $\{S_j\}$ at $p$. By the forms of the vectors, all vectors here are mutually independent. \\
\ \\
Case 1-3.  For some $i_0 \in {\mathbb{N}}_a$, ${\pi}_{n,A_{i_0}}(p)$ is not a normal point of $(D_{A_{i_0}},\{{\pi}_{n,A_i}(S_{j_3})\}_{j_3 \in \{j_2 \mid A_{i_0}=A_{S_{j_2}}\}})$. \\

First, finitely many manifolds $S_{{i_0}_j}$ of $\{S_j\}_{j=1}^l$ with $A_{S_{{i_0}_j}}= A_{i_0}$ contains $p$ and the normal vectors of $S_{{i_0}_j}$ there are of the form ${\Sigma}_{j^{\prime} \in A_{i_0}} a_{j^{\prime}}e_{j^{\prime}}$ with some coefficient being non-zero. They are mutually independent by the definition of an RA-region.

For any element $i \in {\mathbb{N}}_a-\{i_0\}$, from the condition (\ref{thm:1.7}), at most one manifold $S_{i_j}$ of $\{S_j\}_{j=1}^l$ with $A_{S_{i_j}}= A_{i}$ contains $p$ and the normal vectors of $S_{i_j}$ there are of the form ${\Sigma}_{j^{\prime} \in A_{i}} a_{j^{\prime}}e_{j^{\prime}}$ with all coefficients being non-zero. 

We have explained all normal vectors of all manifolds in the family $\{S_j\}$ at $p$. By the forms of the vectors, all vectors here are mutually independent. \\

This completes the proof of the fact that the condition (\ref{def:1.2}) of Definition \ref{def:1} is satisfied.

This completes the proof.
\end{proof}
Hereafter, for $1$-dimensional manifolds in ${\mathbb{R}}^n$ ($n \geq 2$), we also use "{\it curves}" and for ($n-1$)-dimensional manifolds in ${\mathbb{R}}^n$ ($n \geq 1$), we also use "{\it hypersurfaces}". 
What follows is a kind of Corollaries to Theorem \ref{thm:1}.

\begin{Thm}
	\label{thm:2}
	For a pair $(\{S_j\}_{j=1}^{l},\{A_i\}_{i=1}^a)$ as presented in the beginning of Theorem \ref{thm:1}, above the assumption {\rm (}\ref{thm:1.1}{\rm )}, let the size of $A_i$ be $1$ or $2$ for each $i$ and suppose that the PRAR conditions for it are satisfied and that the $1$-intersection condition for it is also satisfied. In this situation, the following two are equivalent.
\begin{itemize}
\item The conditions {\rm (\ref{thm:1.6})} and {\rm (\ref{thm:1.7})} in Theorem \ref{thm:1} are satisfied.
\item Let $N_1$ be an arbitrary one-element set of $\mathbb{N}$ contained in at least two distinct sets from $\{A_i\}_{i=1}^a$. For any pair $(A_{{N_1}_1},A_{{N_1}_2})$ of such mutually distinct sets, there does not exist a pair $(p_1,p_2)$ of ${\pi}_{A_{{N_1}_1},\mathbb{N}}(N_1)$-critical point $p_1$ or a point $p_1$ which is not a normal point of $(D_{A_{{N_1}_1}},\{{\pi}_{n,A_{{N_1}_1}}(S_{j_3})\}_{j_3 \in \{j_2 \mid A_{{N_1}_1}=A_{S_{j_2}}\}})$ and a 
${\pi}_{A_{{N_1}_2},\mathbb{N}}(N_1)$-critical point $p_2$ or a point $p_2$ which is not a normal point of
$(D_{A_{{N_1}_2}},\{{\pi}_{n,A_{{N_1}_2}}(S_{j_3})\}_{j_3 \in \{j_2 \mid A_{{N_1}_2}=A_{S_{j_2}}\}})$ such that ${\pi}_{n,N_1}({{\pi}_{n,A_{{N_1}_1}}}^{-1}(p_1))={\pi}_{n,N_1}({{\pi}_{n,A_{{N_1}_2}}}^{-1}(p_2))$, which is a one-point set.
\end{itemize}
	\end{Thm}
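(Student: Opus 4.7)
The plan is to prove both directions of the equivalence separately, using two simplifications afforded by the hypothesis $|A_i|\le 2$ together with the $1$-intersection condition: namely, when $N_1$ is a one-element set sitting in two distinct sets $A_{{N_1}_1}$ and $A_{{N_1}_2}$, the $1$-intersection condition forces $A_{{N_1}_1}\cap A_{{N_1}_2}=N_1$, so the projected-fiber equality $\pi_{n,N_1}(\pi_{n,A_{{N_1}_1}}^{-1}(p_1))=\pi_{n,N_1}(\pi_{n,A_{{N_1}_2}}^{-1}(p_2))$ reduces to equality of the single $N_1$-coordinate of $p_1$ and of $p_2$, and $p_2$ then sits in $\pi_{n,A_{{N_1}_2}}(\pi_{n,A_{{N_1}_1}}^{-1}(p_1))\cap({\overline{D_{A_{{N_1}_2}}}}^{{\mathbb{R}}^{n_{A_{{N_1}_2}}}}-D_{A_{{N_1}_2}})$. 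Similarly, whenever $N_{A_i,1}\subset A_{i^{\prime}}$ the set $\pi_{n,A_{i^{\prime}}}(\pi_{n,A_i}^{-1}(p_i))$ is the affine hyperplane of $\mathbb{R}^{n_{A_{i^{\prime}}}}$ cut out by fixing the $N_{A_i,1}$-coordinate.

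For the direction ``conditions (\ref{thm:1.6}) and (\ref{thm:1.7}) imply the second bullet'', I argue by contradiction. Given a bad pair $(p_1,p_2)$ for some $N_1$, $A_{{N_1}_1}$, $A_{{N_1}_2}$, the matching $N_1$-coordinates place $p_2$ in the relevant projected-fiber intersection of $p_1$. If $p_1$ is $\pi_{A_{{N_1}_1},\mathbb{N}}(N_1)$-critical, I apply (\ref{thm:1.6}) at $p_1$ with $N_{A_i,1}:=N_1$ and $i^{\prime}:={N_1}_2$ (so we are in the sub-case $N_1\subset A_{i^{\prime}}$) to force $p_2$ to be a $\pi_{A_{{N_1}_2},\mathbb{N}}(N_1)$-N-point, contradicting the assumption that $p_2$ is either critical in that direction or non-normal. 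If instead $p_1$ is not a normal point, the identical contradiction follows using (\ref{thm:1.7}).

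For the converse direction I argue by contrapositive. Suppose (\ref{thm:1.6}) or (\ref{thm:1.7}) fails: some $p_i\in{\overline{D_{A_i}}}^{{\mathbb{R}}^{n_{A_i}}}-D_{A_i}$ which is $\pi_{A_i,\mathbb{N}}(N_{A_i,1})$-critical or non-normal admits a witness $p_{i^{\prime}}$ in the projected-fiber intersection failing the required N-point property. In the sub-case $N_{A_i,1}\subset A_{i^{\prime}}$, take $N_1:=N_{A_i,1}$: the matching $N_1$-coordinates hold by construction, and $(p_i,p_{i^{\prime}})$ is a bad pair contradicting the second bullet. In the disjoint sub-case $N_{A_i,1}\cap A_{i^{\prime}}=\emptyset$, the witness $p_{i^{\prime}}$ is critical or non-normal in some $N_{A_{i^{\prime}},1}\subset A_{i^{\prime}}$; the covering $\bigcup_i A_i=\mathbb{N}_n$ together with $|A_i|\le 2$ and the $1$-intersection condition force the element of $N_{A_{i^{\prime}},1}$ to appear in some $A_{i^{\prime\prime}}\ne A_{i^{\prime}}$, and tracking a matched critical-or-non-normal point there gives a bad pair.

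The step I expect to be the main obstacle is the disjoint sub-case of (\ref{thm:1.6}) (and its analogue in (\ref{thm:1.7})) in the converse direction: the witness index $N_{A_{i^{\prime}},1}$ is a priori unrelated to $N_{A_i,1}$, so producing a bad pair for the second bullet requires locating a second set $A_{i^{\prime\prime}}$ containing the element of $N_{A_{i^{\prime}},1}$ together with a compatible bad point, which relies on a careful combinatorial enumeration using the size hypothesis $|A_i|\le 2$ and the covering and $1$-intersection conditions. This is where the restriction to sizes $1$ or $2$ in Theorem \ref{thm:2} is crucial.
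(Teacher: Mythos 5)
Your first direction (that conditions (\ref{thm:1.6}) and (\ref{thm:1.7}) imply the second bullet) is correct and coincides with the paper's own one-sentence sketch: the $1$-intersection condition gives $A_{{N_1}_1} \bigcap A_{{N_1}_2}=N_1$, the coordinate-matching condition places $p_2$ in $({\overline{D_{A_{{N_1}_2}}}}^{{\mathbb{R}}^{n_{A_{{N_1}_2}}}}-D_{A_{{N_1}_2}}) \bigcap {\pi}_{n,A_{{N_1}_2}}({{\pi}_{n,A_{{N_1}_1}}}^{-1}(p_1))$, and then (\ref{thm:1.6}) or (\ref{thm:1.7}) forces $p_2$ to be a ${\pi}_{A_{{N_1}_2},\mathbb{N}}(N_1)$-N-point, a contradiction. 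Be aware that the paper offers no more detail than this for either direction; it declares the verification ``a kind of routine works,'' so there is no detailed argument to check yours against.

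The converse direction has a genuine gap, located exactly where you flag ``the main obstacle'': the disjoint sub-clauses of (\ref{thm:1.6}) and (\ref{thm:1.7}). Your proposed repair does not work. First, the covering condition ${\bigcup}_{i=1}^a A_i={\mathbb{N}}_n$ only guarantees that each coordinate lies in \emph{some} $A_i$, not in two distinct ones, so the element of $N_{A_{i^{\prime}},1}$ need not appear in any $A_{i^{\prime\prime}}\neq A_{i^{\prime}}$; second, even when it does, nothing in the hypotheses produces the ``compatible bad point'' in $A_{i^{\prime\prime}}$ that your enumeration needs. In fact this step cannot be completed under the stated hypotheses: take $n=4$, $a=2$, $A_1=\{1,2\}$, $A_2=\{3,4\}$, and let $S_1,S_2$ be cylinders over round circles in the corresponding coordinate planes. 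No one-element set is contained in two distinct sets of $\{A_i\}_{i=1}^a$, so the second bullet holds vacuously; yet any ${\pi}_{A_1,\mathbb{N}}(\{1\})$-critical point of the first circle triggers the disjoint clause of (\ref{thm:1.6}) with $i^{\prime}=2$, and since ${\pi}_{4,A_2}({{\pi}_{4,A_1}}^{-1}(p_1))$ is all of ${\mathbb{R}}^2$, its conclusion requires every boundary point of $D_{A_2}$ to be an N-point for both coordinate directions, which fails for a circle. So the second bullet does not imply (\ref{thm:1.6}); the equivalence of Theorem \ref{thm:2} needs either an additional hypothesis (for instance that any two sets $A_{i_1},A_{i_2}$ actually intersect, as in the paper's examples) or the removal of the disjoint clauses. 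Your instinct that the size restriction to $1$ or $2$ is what rescues this sub-case is misplaced: the obstruction is the possible disjointness of the $A_i$, which the stated hypotheses do not exclude, and the paper's own sketch of this direction silently ignores the same clauses.
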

We only give related arguments on Theorem \ref{thm:2}.
 
For each point of $\overline{D_{A_i}}^{{\mathbb{R}}^{n_{A_i}}}-D_{A_i}$ which is not a normal point of $(D_{A_i},\{{\pi}_{n,A_i}(S_{j_3})\}_{j_3 \in \{j_2 \mid A_i=A_{S_{j_2}}\}})$, exactly two curves in $\{{\pi}_{n,A_i}(S_{j_3})\}_{j_3 \in \{j_2 \mid A_i=A_{S_{j_2}}\}}$ containing the point exist. The normal vectors of the curves there are mutually independent and span the tangent vector of the plane ${\mathbb{R}}^2$ there.

We can check Theorem \ref{thm:2} from our definitions and formulations. It is a kind of routine works. 

If the latter condition in Theorem \ref{thm:2} is dropped, then we can see that the condition {\rm (\ref{thm:1.6})} or {\rm (\ref{thm:1.7})} in Theorem \ref{thm:1} is dropped by the definition and the observation of $A_{{N_1}_1}$ and $A_{{N_1}_2}$,  
together with the PRAR conditions for $(\{S_j\}_{j=1}^{l},\{A_i\}_{i=1}^a)$.

Conversely, if at least one of the conditions {\rm (\ref{thm:1.6})} and {\rm (\ref{thm:1.7})} in Theorem \ref{thm:1} is dropped, then for some pair $(A_{{N_1}_1},A_{{N_1}_2})$ of mutually distinct sets from $\{A_i\}_{i=1}^a$ and some pair $(p_1,p_2)$ of points as in the condition, we can have ${\pi}_{n,N_1}({{\pi}_{n,A_{{N_1}_1}}}^{-1}(p_1))={\pi}_{n,N_1}({{\pi}_{n,A_{{N_1}_2}}}^{-1}(p_2))$.

We explain several examples for Theorem \ref{thm:1} (Theorem \ref{thm:2}) as Examples \ref{ex:1} and \ref{ex:2}.
\begin{Ex}
\label{ex:1}
Let $n=3$, $a=2$ and $A_j=\{1,j+1\}$. 
Let $S_1=\{(x_1,x_2,x_3) \in {\mathbb{R}}^3 \mid f_1(x_1,x_2,x_3)=1-{(x_1-\frac{1}{2})}^2-{x_2}^2=0\}$ and
$S_2=\{(x_1,x_2,x_3) \in {\mathbb{R}}^3 \mid f_2(x_1,x_2,x_3)=1-{(x_1+\frac{1}{2})}^2-{x_3}^2=0\}$.
Let $A_{S_i}=A_i$.

\end{Ex}
	\begin{Ex}
\label{ex:2}
		In previous studies such as \cite{kitazawa6, kitazawa7, kitazawa16, kitazawa18}, we have encountered cases where $a>1$ $2$-element sets having $\{1\}$ in common and cylinders $S_j$ of curves of degree $2$ in the plane ${\mathbb{R}}^2$ are given.
		For this, see also the third section.
	\end{Ex}
Hereafter, let $N_d \subset {\mathbb{N}}_n$ denote a $d$-element set as before.

In Definition \ref{def:5}, a higher dimensional version of Definition \ref{def:3} is formulated.
\begin{Def}
\label{def:5}

In Definition \ref{def:1}, we define the following.
\begin{enumerate}
\item Let $1 \leq k \leq n-1$ be an integer. We consider the set $S_{i_j,1 \leq j \leq l^{\prime}}$ of all points of ${\overline{D}}^{{\mathbb{R}}^n}-D$ contained in these $l^{\prime}< n-k+1$ distinct hypersurfaces $S_{i_j}$ and not contained in any other hypersurface $S_j$, as in Definition \ref{def:3}. A point $p \in S_{i_j,1 \leq j \leq l^{\prime}}$ is also an {\it $N_k$-singular} point of $(D,\{S_j\}_{j=1}^l)$ if the normal vector of $S_{i_j,1 \leq j \leq l^{\prime}}$ there is of the form ${\Sigma}_{j \in N_k} t_je_j$.
\item If a normal point $p$ of $(D,\{S_j\}_{j=1}^l)$ is not an $N_k$-critical point of $(D,\{S_j\}_{j=1}^l)$ for a $k$-element set $N_k \subset {\mathbb{N}}_n$, then it is called an {\it $N_k$-normal} point or an {\it $N_k$-N-point} of $(D,\{S_j\}_{j=1}^l)$.
\end{enumerate}
\end{Def}
By our formulation, we immediately have the following.
\begin{Prop}

For an RA-region $(D,\{S_j\}_{j=1}^l)$ and a point of ${\overline{D}}^{{\mathbb{R}}^n}$, the following hold.
\begin{enumerate}
\item A point $p \in {\overline{D}}^{{\mathbb{R}}^n}-D$ is an $N_1$-singular point of $(D,\{S_j\}_{j=1}^l)$ if and only if it is an $N_1$-critical point of $(D,\{S_j\}_{j=1}^l)$.

\item We consider $k_1,k_2 \in \mathbb{N}$ and $N_{k_1}$ and $N_{k_2}$ such that the relation $N_{k_1} \subset N_{k_2} \subset {\mathbb{N}}_n$ holds. A point $p \in {\overline{D}}^{{\mathbb{R}}^n}-D$ is an $N_{k_1}$-N-point of $(D,\{S_j\}_{j=1}^l)$ if it is an $N_{k_2}$-N-point of $(D,\{S_j\}_{j=1}^l)$.
\end{enumerate}
\end{Prop}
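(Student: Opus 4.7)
The plan is to unpack both statements in terms of the tangent and normal spaces of the stratum $S_{i_j,1\le j\le l'}$ that contains $p$, which by Definition \ref{def:1}(\ref{def:1.2}) is a smooth submanifold of codimension $l'$, with tangent and normal spaces of complementary dimensions in $\mathbb{R}^n$.

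For item (1), write $N_1=\{j_0\}$. The differential of $\pi_{n,N_1}$ at any point is the linear functional $v\mapsto v_{j_0}$, whose kernel is $e_{j_0}^{\perp}$. Its restriction to the stratum therefore fails to be surjective at $p$ exactly when $T_p S_{i_j,1\le j\le l'}\subset e_{j_0}^{\perp}$, i.e.\ when $e_{j_0}$ lies in the normal space at $p$; this is precisely the $N_1$-critical condition. On the other hand, being an $N_1$-singular point means that the normal space admits a nonzero vector of the form $t\,e_{j_0}$, which is again the condition that $e_{j_0}$ lies in the normal space. The two notions thus coincide, giving the stated equivalence.

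For item (2), a normal point $p$ lies in exactly one hypersurface $S_{i_0}$, so near $p$ the relevant stratum is $S_{i_0}$ itself, a smooth $(n-1)$-manifold. The inclusion $N_{k_1}\subset N_{k_2}$ induces a surjective coordinate projection $\rho:\mathbb{R}^{k_2}\to\mathbb{R}^{k_1}$ with $\pi_{n,N_{k_1}}=\rho\circ\pi_{n,N_{k_2}}$. If $p$ is an $N_{k_2}$-N-point, then by definition $d(\pi_{n,N_{k_2}}|_{S_{i_0}})_p$ is surjective onto $\mathbb{R}^{k_2}$; composing with the surjection $\rho$ yields a surjection onto $\mathbb{R}^{k_1}$, which equals $d(\pi_{n,N_{k_1}}|_{S_{i_0}})_p$. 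Hence $p$ is not $N_{k_1}$-critical, so $p$ is an $N_{k_1}$-N-point.

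The only real obstacle is interpretational: Definition \ref{def:5} uses the phrase ``the normal vector'' even when $l'\ge 2$ and the normal space is higher dimensional. I would adopt the unambiguous reading ``some nonzero normal vector is of the form $\Sigma_{j\in N_k}t_j e_j$'', equivalently ``the linear span of $\{e_j:j\in N_k\}$ meets the normal space nontrivially''; this is the only convention under which item (1) holds in the higher codimension cases. With that convention in place, both items reduce to the short linear-algebra observations above, and there is no further technical content.
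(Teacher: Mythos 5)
Your proof is correct and is essentially the argument the paper intends: the paper offers no written proof at all (it states only that the proposition follows ``immediately'' from the formulation), and your unpacking of both items into the statement that $e_{j_0}$ (resp.\ the span of $\{e_j\}_{j\in N_k}$) meets the normal space, together with the factorization $\pi_{n,N_{k_1}}=\rho\circ\pi_{n,N_{k_2}}$, is exactly the routine verification being elided. Your interpretational remark about ``the normal vector'' in Definition \ref{def:5} is also well taken, as that reading is indeed the one under which item (1) holds when the stratum has codimension greater than one.
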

\begin{Thm}
\label{thm:3}
In Theorem \ref{thm:1}, the PRAR conditions for a pair $(\{S_j\}_{j=1}^{l},\{A_i\}_{i=1}^a)$, the $b$-intersection condition for it, and the condition {\rm (}\ref{thm:1.7}{\rm )} are also satisfied. Furthermore, the following hold.

\begin{enumerate}
	\setcounter{enumi}{7}
\item \label{thm:3.1} If $p_i \in {\overline{D_{A_i}}}^{{\mathbb{R}}^{n_{A_i}}}-D_{A_i}$ is a ${\pi}_{A_i,\mathbb{N}}(N_{A_i,k})$-singular point of $(D_{A_i},\{{\pi}_{n,A_i}(S_{j_3})\}_{j_3 \in \{j_2 \mid A_i=A_{S_{j_2}}\}})$ which is also a normal point of it for a $k$-element set $N_{A_i,k} \subset A_i$ with $1 \leq k \leq b$, then for any $i^{\prime} \in {\mathbb{N}}_a$ with $i^{\prime} \neq i$ and $N_{A_i,k} \subset A_{i^{\prime}}$, the set $({\overline{D_{{A_{i^{\prime}}}}}}^{{\mathbb{R}}^{n_{A_{i^{\prime}}}}}-D_{{A_{i^{\prime}}}}) \bigcap {\pi}_{n,A_{i^{\prime}}}({{\pi}_{n,A_i}}^{-1}(p_i))$ contains only normal points 
 of $(D_{A_{i^{\prime}}},\{{\pi}_{n,A_{i^{\prime}}}(S_{j_3})\}_{j_3 \in \{j_2 \mid A_{i^{\prime}}=A_{S_{j_2}}\}})$, and for any $i^{\prime} \in {\mathbb{N}}_a$ with $i^{\prime} \neq i$ and $N_{A_i,k}-(N_{A_i,k}  \bigcap A_{i^{\prime}})$ being non-empty, for any one-element set $N_{A_{i^{\prime}},1} \subset A_{i^{\prime}}$, the set $({\overline{D_{{A_{i^{\prime}}}}}}^{{\mathbb{R}}^{n_{A_{i^{\prime}}}}}-D_{{A_{i^{\prime}}}}) \bigcap {\pi}_{n,A_{i^{\prime}}}({{\pi}_{n,A_i}}^{-1}(p_i))$ contains only ${\pi}_{A_{i^{\prime}},\mathbb{N}}(N_{A_{i^{\prime}},1})$-N-points of $(D_{A_{i^{\prime}}},\{{\pi}_{n,A_{i^{\prime}}}(S_{j_3})\}_{j_3 \in \{j_2 \mid A_{i^{\prime}}=A_{S_{j_2}}\}})$. 
\item \label{thm:3.2} 
In {\rm (}\ref{thm:3.1}{\rm )}, for any family $\{p_{i^{\prime \prime}}\}$ of finitely many normal points of $(D_{A_{i^{\prime \prime}}},\{{\pi}_{n,A_{i^{\prime \prime}}}(S_{j_3})\}_{j_3 \in \{j_2 \mid A_{i^{\prime \prime}}=A_{S_{j_2}}\}})$ chosen for mutually distinct $i^{\prime \prime} \in {\mathbb{N}}_a$ with $N_{A_i,k} \subset A_{i^{\prime \prime}}$, the normal vectors of the unique hypersurfaces in $\{{\pi}_{n,A_{i^{\prime \prime}}}(S_{j_3})\}_{j_3 \in \{j_2 \mid A_{i^{\prime \prime}}=A_{S_{j_2}}\}}$ at the points $p_{i^{\prime \prime}}$ are mapped to mutually independent vectors by the differentials of the projections ${\pi}_{n_{A_{i^{\prime \prime}}},{\pi}_{{A_i}^{\prime \prime},\mathbb{N}}(N_{A_i,k})}$ if the points are all mapped to a same point ${\pi}_{n_{A_{i^{\prime \prime}}},{\pi}_{{A_i}^{\prime \prime},\mathbb{N}}(N_{A_i,k})}(p_{i^{\prime \prime}})$ by ${\pi}_{n_{A_{i^{\prime \prime}}},{\pi}_{{A_i}^{\prime \prime},\mathbb{N}}(N_{A_i,k})}$.
\end{enumerate}
Under the present situation, $({\bigcap}_{i=1}^a {{\pi}_{n,A_i}}^{-1}(D_{A_i}),\{S_j\}_{j=1}^l)$ is an RA-region.
\end{Thm}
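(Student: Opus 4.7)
The plan is to follow the template of the proof of Theorem \ref{thm:1} exactly in structure: condition (\ref{def:1.1}) of Definition \ref{def:1} is automatic upon taking $U_D:={\bigcap}_{i=1}^a {{\pi}_{n,A_i}}^{-1}(U_{A_i})$ with $U_{A_i}$ the neighborhood provided by $(D_{A_i},\{{\pi}_{n,A_i}(S_{j_3})\})$ being an RA-region, so the real content is verifying the transversality condition (\ref{def:1.2}) at every $p \in {\bigcap}_{i=1}^a {{\pi}_{n,A_i}}^{-1}({\overline{D_{A_i}}}^{{\mathbb{R}}^{n_{A_i}}})$. For each $i \in {\mathbb{N}}_a$, classify the point ${\pi}_{n,A_i}(p)$ using Definition \ref{def:5}: either (a) it is a normal point that is an $N_1$-N-point for every one-element $N_1 \subset A_i$, or (b) it is a normal point together with some minimal $N_{A_i,k} \subset A_i$ such that ${\pi}_{n,A_i}(p)$ is an $N_{A_i,k}$-singular point (necessarily $1 \leq k \leq b$ by minimality and the $b$-intersection condition's role in the geometry), or (c) it is not a normal point. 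This trichotomy is exhaustive and matches Cases 1-1, 1-2, 1-3 of the proof of Theorem \ref{thm:1}.

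Next, describe the normal vectors at $p$ of each hypersurface $S_j$ containing $p$. Because $S_j$ is a cylinder with variables from $A_{S_j}=A_i$ for some $i$, its normal vector at $p$ lies in the span of $\{e_{j'}\}_{j' \in A_i}$; more precisely, in case (a) exactly at most one such $S_j$ passes through $p$ and its normal has all coefficients in $A_i$ nonzero, in case (b) the normal vector of the unique hypersurface through $p$ with $A_{S_j}=A_i$ is supported on ${\pi}_{A_i,\mathbb{N}}(N_{A_i,k})$ (by definition of $N_{A_i,k}$-singularity), and in case (c) the finitely many hypersurfaces through $p$ have projected normal vectors in ${\mathbb{R}}^{n_{A_i}}$ that are mutually independent by the RA-region property of $(D_{A_i},\{{\pi}_{n,A_i}(S_{j_3})\})$. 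Cylinder structure lifts each of these independence statements back to ${\mathbb{R}}^n$ within the block of coordinates indexed by $A_i$.

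Now glue across $i$. For each $i_0$ in case (b) with minimal singular set $N_{A_{i_0},k}$, apply (\ref{thm:3.1}) to conclude that every other $i' \neq i_0$ with $N_{A_{i_0},k} \subset A_{i'}$ must lie in case (a) (it is a normal point there), and for every $i'$ with $N_{A_{i_0},k} \not\subset A_{i'}$ the restriction to $A_{i'}$ is an $N_{A_{i'},1}$-N-point for every one-element $N_{A_{i'},1} \subset A_{i'}$, i.e.\ also in case (a). Condition (\ref{thm:1.7}) produces the analogous reduction starting from any $i_0$ in case (c). Finally, when several such $i''$ share a common minimal set $N_{A_{i_0},k}$ and their projections to ${\mathbb{R}}^{|N_{A_{i_0},k}|}$ collide, condition (\ref{thm:3.2}) guarantees that the ${\pi}_{A_{i''},\mathbb{N}}(N_{A_{i_0},k})$-components of the relevant normal vectors remain mutually independent, which is the only new phenomenon beyond the $n-1=1$ case handled in Theorem \ref{thm:1}.

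The main obstacle is the combinatorial bookkeeping of the independence argument at step three, since at one $p$ several distinct indices may sit in cases (b) or (c) simultaneously with different minimal singular sets that overlap. The strategy is to organize the normal vectors into blocks indexed by the minimal $N_{A_{i_0},k}$'s and their complements: the $b$-intersection condition forces the complements $A_{i'} - N_{A_{i_0},k}$ associated with distinct $i_0$'s to decouple into disjoint coordinate groups, while (\ref{thm:3.2}) controls the shared $N_{A_{i_0},k}$-block and (a)-type vectors for remaining $i'$ fill in the ``all coefficients nonzero'' parts of the remaining coordinates. Assembling these block-independence statements and writing a putative linear dependence coordinate-by-coordinate forces all coefficients to vanish, establishing (\ref{def:1.2}) and completing the proof.
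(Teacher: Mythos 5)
Your proposal follows essentially the same route as the paper: reduce to verifying condition (\ref{def:1.2}) of Definition \ref{def:1}, run the same three-case classification as in Theorem \ref{thm:1}, and in the generalized Case 1-2 use (\ref{thm:3.1}) to constrain which hypersurfaces pass through $p$ and the supports of their normal vectors, (\ref{thm:3.2}) for independence in the shared $N_{A_{i_0},k}$-block, and the $b$-intersection condition to decouple the complementary coordinates. Your explicit block-by-block bookkeeping for several simultaneous singular indices is somewhat more detailed than the paper's write-up, but the argument is the same in substance and is correct.
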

\begin{proof}
	This is regarded as an extension of Theorem \ref{thm:1} to the $b$-intersection condition with $1 \leq b \leq n-1$.
	
	For the proof, it is sufficient to consider a generalization of Case 1-2 and we discuss the generalized case.
	
    First, we remember the condition (\ref{thm:3.1}) here.
	 
	It is sufficient to consider the case, where for an integer $i_0 \in {\mathbb{N}}_a$, ${\pi}_{n,A_{i_0}}(p)$ is a ${\pi}_{A_{i_0},\mathbb{N}}(N_{A_{i_0},k})$-singular point of $(D_{A_{i_0}},\{{\pi}_{n,A_i}(S_{j_3})\}_{j_3 \in \{j_2 \mid A_{i_0}=A_{S_{j_2}}\}})$ and a normal point of it for the $k$-element set $N_{A_{i_0},k} \subset A_{i_0}$ with $1 \leq k \leq b$. 
	
	Only one manifold $S_{{i_0}_j}$ of $\{S_j\}_{j=1}^l$ with $A_{S_{{i_0}_j}}= A_{i_0}$ contains $p$. The normal vector of $S_{{i_0}_j}$ there is of the form ${\Sigma}_{j^{\prime} \in A_{i_0}} a_{j^{\prime}}e_{j^{\prime}}$ with some coefficient being non-zero. For $i_1 \in {\mathbb{N}}_{N_{A_{i_0},k} \subset A}=\{i \in \mathbb{N} \mid N_{A_{i_0},k} \subset A_i\}$, at most one manifold $S_{{i_1}_j}$ of $\{S_j\}_{j=1}^l$ with $A_{S_{{i_1}_j}}=A_{i_1}$ contains $p$ and the normal vectors of $S_{{i_1}_j}$ there are of the form of the sum of the vector generated by $\{e_j\}_{j \in N_{A_{i_0},k}}$ and another vector of the form ${\Sigma}_{j^{\prime} \in A_{i_1}-N_{A_{i_0},k}} a_{j^{\prime}}e_{j^{\prime}}$.
	Remember the condition (\ref{thm:3.2}), where $\{p_{i^{\prime \prime}}\}$ is labeled by an arbitrary finite subset of ${\mathbb{N}}_{N_{A_{i_0},k} \subset A}$. The presented normal vectors of the manifolds are mutually independent.

	
	For any element $i^{\prime} \in {\mathbb{N}}_a-{\mathbb{N}}_{N_{A_{i_0},k} \subset A}$, from the condition (\ref{thm:3.1}), at most one manifold $S_{{i^{\prime}}_j}$ of $\{S_j\}_{j=1}^l$ with $A_{S_{{i^{\prime}}_j}}= A_{i^{\prime}}$ contains $p$ and the normal vectors of $S_{{i^{\prime}}_j}$ there are of the form ${\Sigma}_{j^{\prime} \in A_{i^{\prime}}} a_{j^{\prime}}e_{j^{\prime}}$ with all coefficients being non-zero.
	
Remember that $A_{i_1} \subset A_{i_2}$ must not hold for any distinct integers $i_1$ and $i_2$, again. 	
	
	We have explained all normal vectors of all manifolds in the family $\{S_j\}$ at $p$. By the forms of the vectors, all vectors here are mutually independent.
	
	This completes the proof.
	\end{proof}

In Example \ref{ex:3}, an explicit case for Theorem \ref{thm:3} is presented.
\begin{Ex}
\label{ex:3}
Let $n=4$, $a=2$, $b=2$, and $A_j=\{1,2,j+2\}$. 
We choose a sufficiently small positive number $r>0$.
Let $S_1=\{(x_1,x_2,x_3,x_4) \in {\mathbb{R}}^4 \mid f_1(x_1,x_2,x_3,x_4)=1-{(x_1-\frac{1}{2})}^2-{x_2}^2-{x_3}^2=0\}$ and 
$S_2=\{(x_1,x_2,x_3,x_4) \in {\mathbb{R}}^4 \mid f_2(x_1,x_2,x_3,x_4)=1+r-{(x_1+\frac{1}{2})}^2-{x_2}^2-{x_4}^2=0\}$.

Let $N_{A_1,2}=\{1,2\} \subset A_1, A_2$.
The sets $S_{1,x_3=x_4=0}=\{(x_1,x_2,0,0) \in {\mathbb{R}}^4 \mid f_1(x_1,x_2,x_3,x_4)=1-{(x_1-\frac{1}{2})}^2-{x_2}^2=0\}$ and 
$S_{2,x_3=x_4=0}=\{(x_1,x_2,0,0) \in {\mathbb{R}}^4 \mid f_2(x_1,x_2,x_3,x_4)=1+r-{(x_1+\frac{1}{2})}^2-{x_2}^2=0\}$ are regarded as the set of all ${\pi}_{A_1,\mathbb{N}}(N_{A_1,2})$-singular points of $(D_{A_1},\{{\pi}_{4,A_1}(S_{j_3})\}_{j_3 \in \{j_2 \mid A_1=A_{S_{j_2}}\}})$ and that of all ${\pi}_{A_2,\mathbb{N}}(N_{A_1,2})$-singular points of $(D_{A_2},\{{\pi}_{4,A_2}(S_{j_3})\}_{j_3 \in \{j_2 \mid A_2=A_{S_{j_2}}\}})$, respectively.

Let $A_{S_i}=A_i$.

\end{Ex}
\section{Additional notes.}
As presented in preprints by the author in \cite{kitazawa9, kitazawa10, kitazawa11, kitazawa12, kitazawa13, kitazawa14, kitazawa15, kitazawa16, kitazawa18} for example, we explain a real algebraic map locally like a moment map onto the closure ${\overline{D}}^{{\mathbb{R}}^n}$ in Definition \ref{def:1}. This is originally presented as \cite[Main Theorem 1]{kitazawa4}.
\begin{Thm}[\cite{kitazawa4}]
\label{thm:4}
	In Definition \ref{def:1}, let $l_{\rm S}:{\mathbb{N}}_{l} \rightarrow {\mathbb{N}}_{l_a}$ be a surjection with $l_a \in \mathbb{N}$ such that $l_{\rm S}(i_1) \neq l_{\rm S}(i_2)$ for distinct two elements $i_1,i_2 \in {\mathbb{N}}_{l}$ with $S_{i_1} \bigcap S_{i_2} \bigcap {\overline{D}}^{{\mathbb{R}}^n} \neq \emptyset$ and let $l_{\rm S,\mathbb{N}}:{\mathbb{N}}_{l_a} \rightarrow \mathbb{N} \sqcup \{0\}$ be a function. Let the value of  $l_{\rm S,\mathbb{N}}$ at each $i \in {\mathbb{N}}_{l_a}$ be denoted by $d_{i,l_a}$.
Then we have a real algebraic manifold 
$M_{(D,\{S_j\}_{j=1}^l,\{d_{i,l_a}\}_{i=1}^{l_a})}:={\bigcap}_{i=1}^{l_a} \{(x,y_1,\cdots,y_{l_a}) \mid y_i=(y_{i,j^{i,l_q}})_{j^{i,l_a}=1}^{d_{i,l_a}+1}, {\prod}_{j_i \in {l_{\rm S}}^{-1}(i)} (f_{j_i}(x))-{\Sigma}_{j^{i,l_a}=1}^{d_{i,l_a}+1} {y_{i,j^{i,l_a}}}^2=0\}$ and a real algebraic map defined as the restriction ${\pi}_{n+{\Sigma}_{i=1}^{l_a} (d_{i,l_a}+1),n} {\mid}_{M_{(D,\{S_j\}_{j=1}^l,\{d_{i,l_a}\}_{i=1}^{l_a})}}$.
\end{Thm}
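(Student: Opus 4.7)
The plan is to exhibit $M := M_{(D,\{S_j\}_{j=1}^l,\{d_{i,l_a}\}_{i=1}^{l_a})}$ as the union of those connected components of the zero set $F^{-1}(0)$ of the polynomial map $F : {\mathbb{R}}^{n+\Sigma_{i=1}^{l_a}(d_{i,l_a}+1)} \rightarrow {\mathbb{R}}^{l_a}$ whose $i$-th component is
$$F_i(x,y) := {\prod}_{j_i \in {l_{\rm S}}^{-1}(i)} f_{j_i}(x) - {\Sigma}_{j=1}^{d_{i,l_a}+1} y_{i,j}^{2},$$
on which the differential $dF$ is surjective, and then observe that the restriction of ${\pi}_{n+\Sigma(d_{i,l_a}+1),n}$ to $M$ is a polynomial map, hence real algebraic in the sense of the paper. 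The substance is thus entirely the Jacobian computation at points of $M$ projecting into ${\overline{D}}^{{\mathbb{R}}^n}$.

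I would exploit the block structure of $dF$: the partial derivative $\partial F_i/\partial y_{i',j}$ equals $-2y_{i,j}$ when $i'=i$ and vanishes otherwise, so the $y$-variables split into $l_a$ blocks, with block $i$ appearing only in row $i$. Consequently, any row $i$ of $dF$ having $y_i \neq 0$ is automatically linearly independent of the remaining rows, and linear independence of the full Jacobian reduces to linear independence of the $x$-gradients $\nabla_x F_i$ over those rows where $y_i = 0$. For such a row, $F_i = 0$ forces ${\prod}_{j_i \in l_{\rm S}^{-1}(i)} f_{j_i}(x) = 0$, so $x$ lies on $S_{j_i}$ for some $j_i \in l_{\rm S}^{-1}(i)$.

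The hypothesis on $l_{\rm S}$ (that $l_{\rm S}(i_1) \neq l_{\rm S}(i_2)$ whenever $S_{i_1} \cap S_{i_2} \cap {\overline{D}}^{{\mathbb{R}}^n} \neq \emptyset$) then yields that exactly one $j_i$ in $l_{\rm S}^{-1}(i)$ has $f_{j_i}(x)=0$ at a point $x \in {\overline{D}}^{{\mathbb{R}}^n}$, and all other $f_{j'}(x)$ with $j' \in l_{\rm S}^{-1}(i) \setminus \{j_i\}$ are nonzero. The Leibniz rule for the product then gives
$$\nabla_x F_i(x) = \Bigl({\prod}_{j' \in l_{\rm S}^{-1}(i),\ j' \neq j_i} f_{j'}(x)\Bigr)\, \nabla f_{j_i}(x),$$
a nonzero scalar multiple of $\nabla f_{j_i}(x)$. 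As $i$ ranges over all rows with $y_i = 0$, the resulting indices $j_i$ correspond to distinct hypersurfaces $S_{j_i}$ all passing through the common point $x$; condition (\ref{def:1.2}) of Definition \ref{def:1} then guarantees that $\{\nabla f_{j_i}(x)\}$ is a linearly independent system, since the codimension of the intersection of the tangent spaces equals the number of surfaces. Combined with the block argument, every row of $dF$ is independent, so $dF$ has maximal rank $l_a$ on the components projecting into ${\overline{D}}^{{\mathbb{R}}^n}$, and these components together form the real algebraic manifold $M$.

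The step I expect to be most delicate is the reduction in the third paragraph: ensuring that the one preselected $j_i$ really is unique in $l_{\rm S}^{-1}(i)$ and that every other $f_{j'}(x)$ stays nonzero. This requires combining the separation hypothesis on $l_{\rm S}$ with the local description $U_D \cap S_j = U_D \cap \{f_j=0\}$ from (\ref{def:1.1}) to exclude stray components of $\{f_{j'}=0\}$ outside the prescribed $S_{j'}$; once this bookkeeping is in place, the linear algebra is routine and the manifold/real-algebraic-map conclusion follows immediately.
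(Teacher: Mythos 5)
The paper does not actually prove Theorem \ref{thm:4}: it is imported verbatim from \cite{kitazawa4} (``originally presented as Main Theorem 1''), so there is no in-paper argument to compare against. That said, your proposal is the natural and intended argument: the block-triangular structure of the Jacobian of $F$ in the $y$-variables, the reduction to the $x$-gradients of the rows with $y_i=0$, the use of the separation hypothesis on $l_{\rm S}$ together with $U_D\bigcap S_j=U_D\bigcap\{f_j=0\}$ to isolate a single vanishing factor in each product, and finally condition (\ref{def:1.2}) to get independence of the surviving gradients $\nabla f_{j_i}(x)$ (note you should also say explicitly that $\nabla f_{j_i}(x)\neq 0$, which holds because $S_{j_i}$ is by definition a non-singular component of $\{f_{j_i}=0\}$). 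This computation is correct.

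The one genuine gap is in the very first step, where you select ``those connected components of $F^{-1}(0)$ on which $dF$ is surjective'' and later identify them with ``the components projecting into ${\overline{D}}^{{\mathbb{R}}^n}$.'' Your Jacobian computation establishes surjectivity of $dF$ only at points of $F^{-1}(0)$ lying over ${\overline{D}}^{{\mathbb{R}}^n}$, but the paper's definition of a real algebraic manifold requires $M$ to be a union of \emph{whole} connected components on which the restriction is non-singular. So you still need to rule out that a component meeting ${{\pi}_{n+\Sigma(d_{i,l_a}+1),n}}^{-1}({\overline{D}}^{{\mathbb{R}}^n})$ escapes from it (where your computation says nothing). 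This can be closed by a local sign analysis at a boundary point $x_0\in{\overline{D}}^{{\mathbb{R}}^n}-D$: the factors $f_j$ with $f_j(x_0)\neq 0$ are positive near $x_0$, the separation hypothesis on $l_{\rm S}$ puts at most one vanishing factor into each product ${\prod}_{j\in {l_{\rm S}}^{-1}(i)}f_j$, and hence the constraints ${\prod}_{j\in {l_{\rm S}}^{-1}(i)}f_j(x)={\Sigma}_j y_{i,j}^2\geq 0$ force $f_{j}(x)\geq 0$ for every vanishing index, i.e.\ $x\in{\overline{D}}^{{\mathbb{R}}^n}$ locally. Without some such argument (or an explicit restriction of $M$ to the components over ${\overline{D}}^{{\mathbb{R}}^n}$, which is clearly what \cite{kitazawa4} intends, since the full zero set can contain singular points far from $U_D$), the identification of $M$ with a union of non-singular connected components is not justified.
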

We are interested in topological properties and combinatorial ones of the function ${\pi}_{n+{\Sigma}_{i=1}^{l_a} (d_{i,l_a}+1),1} {\mid}_{M_{(D,\{S_j\}_{j=1}^l,\{d_{i,l_a}\}_{i=1}^{l_a})}}$.
Such properties are understood by the {\it Reeb {\rm (}di{\rm )}graphs} of the function.

For a smooth function on a manifold $X$ with no boundary $c:X \rightarrow \mathbb{R}$, we can define the quotient space $R_c$ and the quotient map $q_{c}:X \rightarrow R_c$ and can have the unique continuous function $\bar{c} \rightarrow \mathbb{R}$ with $c=\bar{c} \circ q_c$.
In the case $X$ is a closed manifold with the image of the set of all singular points of $c$ being finite and explicit non-proper cases such as ones presented in \cite{kitazawa2}, for example, this quotient space $R_c$ has the structure of a graph as follows. According to \cite[Theorem 3.1]{saeki2} and \cite[Theorem 1]{saeki3}, $R_c$ is a graph where a point $v$ with ${q_c}^{-1}(v)$ containing some singular points of $c$ is defined to be its vertex in the former case. The latter case can be understood by observing local behavior of the functions. This
 graph $R_c$ is the {\it Reeb graph} of $c$. This is also a digraph where each edge $e$ is oriented from a vertex $v_{e,1}$ to another vertex $v_{e,2}$ where the relation $\bar{c}(v_{e,1})<\bar{c}(v_{e,2})$ and the {\it Reeb digraph} $\overrightarrow{R_c}$ of $c$ is obtained.

We go back to Theorem \ref{thm:4}. This is motivated by the real algebraic version of reconstruction of nice smooth functions with prescribed Reeb {\rm (}di{\rm )}graphs, started by Sharko in \cite{sharko}, followed by studies such as \cite{masumotosaeki, michalak} and the author also follows them by respecting shapes of level sets, not only the Reeb graphs in \cite{kitazawa1, kitazawa2}. The paper \cite{kitazawa3} is a related pioneering paper on real algebraic construction, followed first by \cite{kitazawa4, kitazawa5, kitazawa6, kitazawa7} and remarked by \cite{kitazawa8}, by the author himself. In our present studies and closely related ones, we are trying to contribute to systematic studies on shapes of real algebraic manifolds and functions and maps on them.  
\section{Conflict of interest and data availability.}
\noindent {\bf Conflict of interest.} \\
The author is also a researcher at Osaka Central
Advanced Mathematical Institute (OCAMI researcher), supported by MEXT Promotion of Distinctive Joint Research Center Program JPMXP0723833165. He is not employed there. This is for our studies
and our study also thanks this. \\
\ \\
{\bf Data availability.} \\
No other data is associated to the paper.

\end{document}